\newtheorem{Theorem}{Theorem}[section]
\newtheorem{Lemma}[Theorem]{Lemma}
\newtheorem{Proposition}[Theorem]{Proposition}
\newtheorem{Corollary}[Theorem]{Corollary}
\newcommand{\bC}{\ensuremath{\mathbb C}}
\newcommand{\bD}{\ensuremath{\mathbb D}}
\newcommand{\bE}{\ensuremath{\mathbb E}}
\newcommand{\bG}{\ensuremath{\mathbb G}}
\newcommand{\bH}{\ensuremath{\mathbb H}}
\newcommand{\bR}{\ensuremath{\mathbb R}}
\newcommand{\bS}{\ensuremath{\mathbb S}}
\newcommand{\cC}{\ensuremath{\mathcal C}}
\DeclareMathOperator{\Dec}{Dec}
\DeclareMathOperator{\eps}{\epsilon}
\newcommand\Eq{\ensuremath{\mathsf{Eq}}}
\newcommand\Gpd{\ensuremath{\mathsf{Gpd}}}
\newcommand\Cat{\ensuremath{\mathsf{Cat}}}
\begin{document}

\newenvironment{changemargin}[2]{\begin{list}{}{
\setlength{\topsep}{0pt}
\setlength{\leftmargin}{0pt}
\setlength{\rightmargin}{0pt}
\setlength{\listparindent}{\parindent}
\setlength{\itemindent}{\parindent}
\setlength{\parsep}{0pt plus 1pt}
\addtolength{\leftmargin}{#1}\addtolength{\rightmargin}{#2}
}\item}{\end{list}}

\title{A characterization of final functors between internal groupoids in exact categories}
\author{A.\ S.\ Cigoli}

\maketitle

\begin{abstract}
This paper provides several characterizations of final functors between internal groupoids in a Barr-exact category \cC. In particular, it is proved that an internal functor between groupoids is final if and only if it is full and essentially surjective.
\end{abstract}


\section{Introduction}

The \emph{comprehensive factorization} of a functor was introduced by Street and Walters in \cite{StW}, where they showed that any functor $F \colon \bC \to \bD$ between arbitrary categories is the composite of an initial functor followed by a discrete opfibration, and these two classes of morphisms form a factorization system for \Cat. By duality, a similar factorization can be obtained by means of a final functor followed by a discrete fibration, and the latter too is known under the name of comprehensive factorization. 

Let us recall that a functor $F \colon \bC \to \bD$ is called \emph{final} if, for any object $y$ in \bD, the comma category $(y \downarrow F)$ is non-empty and connected. On the other hand, a \emph{discrete fibration} is a functor $F \colon \bC \to \bD$ such that, for any object $x$ in \bC\ and any arrow $g \colon y \to F(x)$ in \bD, there exists a unique arrow $f$ in \bC\ with codomain $x$ and such that $F(f)=g$. Initial functors and discrete opfibrations are the obvious dualizations of the previous concepts.

It is an easy observation that if we restrict our attention to groupoids, then initial and final functors coincide, and the same is true for (discrete) fibrations and opfibrations. Moreover the following elementary result holds.

\begin{Proposition} \label{prop:final=fes}
A functor between groupoids is final if and only if it is full and essentially surjective.
\end{Proposition}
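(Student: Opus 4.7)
The plan is to unfold the definition of a final functor between groupoids and exploit, at every step, the fact that every arrow is invertible in both $\bC$ and $\bD$. Let me first fix notation: an object of the comma category $(y \downarrow F)$ is a pair $(x,g)$ with $x$ in $\bC$ and $g\colon y \to F(x)$ in $\bD$, and a morphism $(x_1,g_1)\to(x_2,g_2)$ is an arrow $h\colon x_1\to x_2$ in $\bC$ satisfying $F(h)\circ g_1 = g_2$. Since $\bD$ is a groupoid, each such $g$ is automatically invertible; since $\bC$ is a groupoid, a zig-zag of morphisms in $(y\downarrow F)$ can always be collapsed to a single forward morphism by inverting the backward arrows in $\bC$ and composing.

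For the forward implication I assume that $(y\downarrow F)$ is non-empty and connected for every $y\in\bD$. Non-emptiness immediately gives essential surjectivity: an object $(x,g)$ of $(y\downarrow F)$ provides an iso $g\colon y\to F(x)$. To prove fullness, I take an arbitrary arrow $f\colon F(x_1)\to F(x_2)$ in $\bD$ and set $y:=F(x_1)$. Then $(x_1,\mathrm{id}_{F(x_1)})$ and $(x_2,f)$ both lie in $(y\downarrow F)$, so by connectedness there is a zig-zag between them, which by the observation above reduces to a single morphism $h\colon x_1\to x_2$ in $\bC$ satisfying $F(h)\circ \mathrm{id}_{F(x_1)}=f$, i.e.\ $F(h)=f$.

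For the backward implication I assume $F$ is full and essentially surjective. Given $y$ in $\bD$, essential surjectivity yields an iso $g\colon y\to F(x)$, so $(x,g)\in(y\downarrow F)$ and the comma category is non-empty. For connectedness, given two objects $(x_1,g_1)$ and $(x_2,g_2)$ of $(y\downarrow F)$, I form the composite $f:=g_2\circ g_1^{-1}\colon F(x_1)\to F(x_2)$ in $\bD$ and lift it through fullness to some $h\colon x_1\to x_2$ in $\bC$ with $F(h)=f$; a direct check shows $F(h)\circ g_1=g_2$, so $h$ is a morphism $(x_1,g_1)\to(x_2,g_2)$ in the comma category.

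There is no real obstacle here: the argument is essentially a bookkeeping exercise, and the only place one must pay attention is in the two uses of invertibility (collapsing the zig-zag in $\bC$ in the forward direction, and forming $g_2\circ g_1^{-1}$ in $\bD$ in the backward direction), both of which are immediate because $\bC$ and $\bD$ are groupoids.
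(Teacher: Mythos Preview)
Your proof is correct and follows essentially the same approach as the paper's: both directions use the same objects $(x_1,\mathrm{id}_{F(x_1)})$ and $(x_2,f)$ in the comma category for fullness, and the same composite $g_2\circ g_1^{-1}$ for connectedness. Your only addition is the explicit remark that zig-zags in $(y\downarrow F)$ collapse to single morphisms because $\bC$ is a groupoid (equivalently, $(y\downarrow F)$ is itself a groupoid), a point the paper leaves implicit by speaking directly of ``a connecting isomorphism''.
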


\begin{proof}
Let $F \colon \bC \to \bD$ be a functor between groupoids. Asking that, for any object $y$ in \bD, the comma category $(y \downarrow F)$ is non-empty is equivalent to asking that $F$ is essentially surjective.

Suppose now that $F$ is full and consider two objects $(g\colon y \to F(x),x)$ and $(g'\colon y \to F(x'),x')$ in the comma category $(y \downarrow F)$. Then the arrow $g'g^{-1}\colon F(x) \to F(x')$ has an inverse image $f\colon x \to x'$ in \bC, and $(g'g^{-1},f)$ is an arrow in $(y \downarrow F)$ connecting $(g,x)$ and $(g',x')$. Conversely, if $F$ is final, then given an arrow $g\colon F(x) \to F(x')$, an inverse image of $g$ is the second projection of a connecting isomorphism between $(1_{F(x)},x)$ and $(g,x')$ in the comma category $(F(x) \downarrow F)$
\end{proof}

An internal version of the comprehensive factorization system for functors between groupoids in a Barr-exact category has been developed by Bourn in \cite{Bourn87}, where he provided an explicit construction of the above factorization by means of the \emph{d\'ecalage} functor. In that paper, Bourn takes discrete fibrations as basic notion and defines final functors as their orthogonal class. As far as we know, the characterization of Proposition \ref{prop:final=fes} has no internal counterpart in the literature. It is the aim of the present paper to fill this gap.

Throughout the paper \cC\ will be a Barr-exact category.


\section{Internal groupoids}

We fix here some notation and recall some basic facts about internal groupoids in Barr-exact categories.

An internal category in \cC\ is given by a diagram
\[
\xymatrix{
H_1 \times_{(c,d)} H_1 \ar@<1ex>[r]^-{p_2} \ar@<-1ex>[r]_-{p_1} \ar[r]|-{m_H} & H_1 \ar@<1ex>[r]^-{c_H} \ar@<-1ex>[r]_-{d_H} & H_0 \ar[l]|-{e_H}
}
\]
satisfying the usual axioms for a category. It is a groupoid when, in addition, it comes equipped with an inversion morphism for arrows
\[
i_H\colon H_1 \to H_1
\]
satisfying the well known axioms. We will avoid the use of subscripts as far as no confusion arises.

An internal functor, denoted by $F\colon \bH \to \bG$, is given by a pair of morphisms $(f_0,f_1)$ in \cC\ such that $f_0d=df_1$, $f_0c=cf_1$, $ef_0=f_1e$ and $m_G(f_1\times_{f_0}f_1)=f_1m_H$:
\[
\xymatrix{
H_1 \ar@<1ex>[d]^c \ar@<-1ex>[d]_d \ar[r]^{f_1} & G_1 \ar@<1ex>[d]^c \ar@<-1ex>[d]_d \\
H_0 \ar[u]|e \ar[r]_{f_0} & G_0 \ar[u]|e
}
\]
We denote by $\Gpd(\cC)$ the category of internal groupoids in \cC\ and functors between them.

\subsection{Some relevant classes of functors} \label{sec:functclasses}

An internal \emph{discrete fibration} is a functor $F\colon \bH \to \bG$ of internal categories such that the following square of solid arrows is a pullback:
\[
\xymatrix{
H_1 \ar@<.7ex>[d]^c \ar@<-.7ex>@{-->}[d]_d \ar[r]^{f_1} & G_1 \ar@<.7ex>[d]^c \ar@<-.7ex>@{-->}[d]_d \\
H_0 \ar[r]_{f_0} & G_1
}
\]
In the case of groupoids, thanks to the inversion morphisms, the commutative square with dashed downwarded arrows is a pullback too, hence $F$ is also a discrete opfibration. It is easy to prove that discrete (op)fibrations are pullback stable.

Given a groupoid \bG\ and a morphism $f\colon X \to G_0$ in \cC, the following procedure yields an internal functor. Consider the pullback
\[
\xymatrix{
P \ar[d]_{p_1} \ar[r]^{p_2} & G_1 \ar[d]^{(d,c)} \\
X \times X \ar[r]_-{f \times f} & G_0 \times G_0
}
\]
Then the following is an internal functor between groupoids
\[
\xymatrix{
P \ar@<.7ex>[d]^{c} \ar@<-.7ex>[d]_{d} \ar[r]^{p_2} & G_1 \ar@<.7ex>[d]^c \ar@<-.7ex>[d]_d \\
X \ar[r]_{f} & G_0
}
\]
(where the $d$ and $c$ on the left are the composites of $p_1$ with the product projections) which is indeed a cartesian lifting of $f$ at \bG, with respect to the fibration $()_0\colon \Gpd(\cC)\to \cC$ associating with any groupoid in \cC\ its object of objects. So we are allowed to denote by $f^*\bG$ the domain of the functor $(f,p_2)$. The square above is also called the joint pullback of $(d,c)$ along $f$.

In particular, given a functor $F\colon \bH\to\bG$ in $\Gpd(\cC)$, one can factor $F$ through $f_0^*\bG$, as in the following diagram:
\[
\xymatrix{
H_1 \ar@/^3ex/[rr]^{f_1} \ar[r]_{\phi_F} \ar@<.7ex>[d]^c \ar@<-.7ex>[d]_d & P \ar@<.7ex>[d]^{c} \ar@<-.7ex>[d]_{d} \ar[r]_{p_2} & G_1 \ar@<.7ex>[d]^c \ar@<-.7ex>[d]_d \\
H_0 \ar@{=}[r] & H_0 \ar[r]_{f_0} & G_0
}
\]
$F$ is said to be \emph{full} if $\phi_F$ is a regular epimorphism and \emph{faithful} if $\phi_F$ is a monomorphism. In fact, the above procedure yields a factorization system for $\Gpd(\cC)$ given by bijective on objects and fully faithful functors.

\begin{Proposition} \label{prop:pbff}
Let the diagram below be a pullback in $\Gpd(\cC)$:
\[
\xymatrix{
\bH \times_{\bG} \bH' \ar[r]^-{\overline{F}} \ar[d]_{\overline{F}'} & \bH' \ar[d]^{F'} \\
\bH \ar[r]_{F} & \bG
}
\]
The following properties hold:
\begin{enumerate}
 \item If $F$ is full, $\overline{F}$ is full, and the converse is true if the arrow component $f_1'$ of $F'$ is a regular epimorphism;
 \item If $F$ is faithful, $\overline{F}$ is also faithful.
\end{enumerate}
\end{Proposition}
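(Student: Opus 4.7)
The plan is to reduce both statements to the pullback-stability of regular epimorphisms and monomorphisms in the Barr-exact category \cC. First I would recall that pullbacks in $\Gpd(\cC)$ are computed componentwise on objects and on arrows, so that the object and arrow components of $\bH \times_{\bG} \bH'$ are $K_0 := H_0 \times_{G_0} H'_0$ and $K_1 := H_1 \times_{G_1} H'_1$ respectively, with $\overline{F}$ and $\overline{F}'$ the evident projections.

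The key step is to identify the joint pullback $Q := \overline{f}_0^{\,*}\bH'$, through which $\overline{F}$ factors, with a pullback constructed from $P := f_0^{\,*}\bG$. Routine use of the pasting lemma yields a canonical isomorphism
$$Q \cong P \times_{G_1} H'_1,$$
under which the projection $p_Q\colon Q \to P$ becomes the pullback of $f'_1\colon H'_1 \to G_1$ along the canonical map $P \to G_1$. With this identification at hand, one verifies that $\phi_{\overline{F}}\colon K_1 \to Q$ is precisely the pullback of $\phi_F\colon H_1 \to P$ along $p_Q$. This realisation of $\phi_{\overline{F}}$ as a pullback of $\phi_F$ is the heart of the argument, and it amounts to careful bookkeeping between the various joint-pullback descriptions of $Q$.

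Once this is in place, both items follow from standard properties of the Barr-exact category \cC. Pullback stability of monomorphisms gives~(2) at once, while pullback stability of regular epimorphisms gives the forward implication of~(1). For the converse of~(1), assume that $f'_1$ is a regular epimorphism, so that $p_Q$ is one as well (as a pullback of $f'_1$). If moreover $\phi_{\overline{F}}$ is a regular epimorphism, then the composite $p_Q\cdot\phi_{\overline{F}} = \phi_F\cdot\pi_H$ (where $\pi_H\colon K_1 \to H_1$ is the projection to $\bH$) is a regular epimorphism, and the general fact that in a regular category $h\cdot k$ being a regular epimorphism forces $h$ itself to be one (via its regular epi--mono factorisation) yields that $\phi_F$ is a regular epimorphism.

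The main obstacle is really the first identification $Q \cong P \times_{G_1} H'_1$; once it is secured, everything else is pullback calculus in \cC.
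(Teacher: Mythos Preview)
Your proposal is correct and follows essentially the same approach as the paper: the paper also sets up a cube whose top face exhibits $P'\cong P\times_{G_1}H_1'$ (your $Q\cong P\times_{G_1}H_1'$), then uses the pasting lemma to realise $\phi_{\overline{F}}$ as the pullback of $\phi_F$ along $v=p_Q$, and concludes by pullback stability of monomorphisms and regular epimorphisms in \cC. You are in fact slightly more explicit than the paper about the converse of (1), correctly arguing via the composite $p_Q\cdot\phi_{\overline{F}}=\phi_F\cdot\pi_H$; the paper leaves this step implicit.
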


\begin{proof}
Let us consider the following commutative diagram, where the front and back faces of the cube are the pullbacks yielding the fully faithful liftings of $f_0$ and $\overline{f}_0$ at \bG\ and $\bH'$ respectively:
\[
\xymatrix@=4ex{
P' \ar[rr]^{u'} \ar[dd] \ar[dr]_{v} & & H_1' \ar[dd]_(.3){(d,c)} \ar[dr]^{f_1'} \\
& P \ar[rr]_(.3){u} \ar[dd] & & G_1 \ar[dd]^{(d,c)} \\
(H_0 \times_{G_0} H_0') \times (H_0 \times_{G_0} H_0') \ar[dr]_{\overline{f}_0'\times \overline{f}_0'} \ar[rr]_(.75){\overline{f}_0\times \overline{f}_0} & & H_0' \times H_0' \ar[dr]^{f_0'\times f_0'} \\
& H_0 \times H_0 \ar[rr]_{f_0 \times f_0} & & G_0 \times G_0
}
\]
Since the bottom face is a pullback, then so is the top face. As a consequence, in the diagram below, since the whole rectangle and the right square are pullbacks, so is the square on the left hand side.
\[
\xymatrix{
H_1 \times_{G_1} H_1' \ar[d]_{\overline{f}_1'} \ar[r]^-{\phi_{\overline{F}}} & P' \ar[d]_v \ar[r]^{u'} & H_1' \ar[d]^{f_1'} \\
H_1 \ar[r]_{\phi_{F}} & P \ar[r]_{u} & G_1
}
\]
Now the theses follow by definition of full and faithful functor and from the fact that monomorphisms and regular epimorphisms are pullback stable in \cC.
\end{proof}

Finally, given a functor $F\colon\bH\to\bG$, consider the following pullback in \cC:
\[
\xymatrix{
E_0 \ar[r]^-{\overline{f_0}} \ar[d]_{p_1} & G_1 \ar[d]^{d} \\
H_0 \ar[r]_{f_0} & G_0
}
\]
$F$ is said to be \emph{essentially surjective} if the composite $c\cdot\overline{f_0}\colon E_0\to G_0$ is a regular epimorphism.

\subsection{Support and connected components}

For any groupoid \bH, the pair $(d,c)$ factors through an equivalence relation $\Sigma\bH$ on $H_0$
\[
\xymatrix{
\Sigma H_1 \ar@<1ex>[r]^{r_2} \ar@<-1ex>[r]_{r_1} & H_0 \ar[l]|{s}
} \,,
\]
where $\Sigma H_1$ is the regular image of $H_1$ in $H_0 \times H_0$
\[
\xymatrix{
H_1 \ar@{->>}[r]^-{\sigma_H} \ar@/_3ex/[rr]_{(d,c)} & \Sigma H_1 \ar@{ >->}[r]^-{(r_1,r_2)} & H_0 \times H_0
} \,.
\]
\cC\ being Barr-exact, $\Sigma\bH$ is effective and we will denote by $q_H \colon H_0 \to \pi_0(\bH)$ its quotient, which is also the coequalizer of $(d,c)$.

In fact, the above procedure defines three functors:
\[
\Sigma \colon \Gpd(\cC) \to \Eq(\cC) \,, \qquad \bH \mapsto \Sigma\bH
\]
the \emph{support} functor, sending each groupoid \bH\ to its associated equivalence relation, also called the support of \bH.
\[
Q \colon \Eq(\cC) \to \cC 
\]
sending each equivalence relation to its quotient object.
\[
\pi_0 \colon \Gpd(\cC) \to \cC
\]
the \emph{connected components} functor, which is nothing but the composite $\pi_0=Q\cdot\Sigma$, sending each groupoid \bH\ to its object of connected components $\pi_0(\bH)$. From now on, we will use the notation $\pi_0$ also for the quotient of an equivalence relation, identifying it with the associated internal groupoid.
\medskip

The next two results are based on Proposition 1.1 in \cite{Bourn03} and will be useful afterwards.

\begin{Proposition} \label{prop:ff<->mono}
A functor $F\colon\bR\to\bS$ between internal equivalence relations in \cC\ is fully faithful if and only if $\pi_0(F)$ is monomorphic.
\end{Proposition}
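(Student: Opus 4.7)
The plan is to reduce both conditions to subobject equalities inside $R_0 \times R_0$, and then invoke the characterization of monomorphisms via kernel pairs available in any regular category. The first key observation is that, since $(d,c)_{\bR}$ and $(d,c)_{\bS}$ are both monomorphisms, the pullback $P$ associated to $F$ (as in the construction of Section~\ref{sec:functclasses}) embeds as a subobject of $R_0 \times R_0$, and $\phi_F\colon R_1 \to P$ is a factorization of the monic $(d,c)_{\bR}\colon R_1 \hookrightarrow R_0 \times R_0$ through $P \hookrightarrow R_0 \times R_0$. Consequently $\phi_F$ is automatically monic, so between equivalence relations faithfulness is free, and $F$ is fully faithful if and only if $\phi_F$ is a regular epimorphism, equivalently an isomorphism, equivalently $R_1 = P$ as subobjects of $R_0 \times R_0$.

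Next I would identify $P$ as a kernel pair. Since $\cC$ is Barr-exact and $\bS$ is an effective equivalence relation, $S_1$ is precisely the kernel pair of $q_S\colon S_0 \to \pi_0(\bS)$. Pulling this kernel pair back along $f_0 \times f_0$ shows that $P = (f_0 \times f_0)^{-1}(S_1)$ is exactly the kernel pair of the composite $q_S \circ f_0 = \pi_0(F) \circ q_R$.

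Finally, I would invoke the standard regular-category fact: for a regular epimorphism $q_R$ with kernel pair $R_1$, an arrow $h$ with domain $\pi_0(\bR)$ is monic if and only if the kernel pair of $h \circ q_R$ coincides with $R_1$. Applied to $h = \pi_0(F)$ and combined with the identifications above, this gives $\pi_0(F)$ monic iff $P = R_1$ iff $F$ fully faithful. The main obstacle is of a purely diagrammatic nature: one must assemble the compatible pullbacks to recognize $P$ as the kernel pair of $q_S \circ f_0$, and invoke Barr-exactness at the right place so that $S_1$ really is the kernel pair of $q_S$. Once these identifications are in place, the result reduces to the familiar principle that a quotient map is injective on classes exactly when the relation it induces upstairs agrees with the one we started from.
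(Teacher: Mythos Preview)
Your argument is correct. Each of the reductions you make is sound: for equivalence relations $(d,c)$ is monic so $\phi_F$ is automatically monic and ``fully faithful'' collapses to ``$\phi_F$ is an isomorphism''; the pullback $P=(f_0\times f_0)^{-1}(S_1)$ is indeed the kernel pair of $q_S f_0=\pi_0(F)q_R$ because $S_1$ is the kernel pair of $q_S$ by Barr-exactness; and the final step is the standard regular-category fact that, for a regular epimorphism $q_R$ with kernel pair $R_1$, a map $h$ out of its codomain is monic precisely when the kernel pair of $hq_R$ agrees with $R_1$.

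The paper, however, takes a different and much shorter route: it simply observes that both rows of
\[
\xymatrix{
R_1 \ar@<.7ex>[r] \ar@<-.7ex>[r] \ar[d]_{f_1} & R_0 \ar[d]^{f_0} \ar@{->>}[r] & \pi_0(\bR) \ar[d]^{\pi_0(F)} \\
S_1 \ar@<.7ex>[r] \ar@<-.7ex>[r] & S_0 \ar@{->>}[r] & \pi_0(\bS)
}
\]
are exact forks and invokes Proposition~1.1 of \cite{Bourn03}, which directly gives the equivalence between ``the left-hand square is a pullback'' (i.e.\ $F$ fully faithful) and ``$\pi_0(F)$ is a monomorphism''. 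In effect, your argument is a self-contained proof of that cited proposition in the present special case: you gain independence from the external reference at the cost of spelling out the kernel-pair identifications by hand, whereas the paper trades brevity for a black-box citation.
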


\begin{proof}
Let us draw the components of $F$ vertically, and compute $\pi_0(F)$ as the induced arrow between the quotient objects of the domain and codomain:
\[
\xymatrix{
R_1 \ar@<.7ex>[r]^{r_2} \ar@<-.7ex>[r]_{r_1} \ar[d]_{f_1} & R_0 \ar[d]^{f_0} \ar@{->>}[r] & \pi_0(\bR) \ar[d]^{\pi_0(F)} \\
S_1 \ar@<.7ex>[r]^{r_2} \ar@<-.7ex>[r]_{r_1} & S_0 \ar@{->>}[r] & \pi_0(\bS)
}
\]
Since the two rows in the above diagram are exact forks (a regular epimorphism with its kernel pair), the thesis follows by Proposition 1.1 in \cite{Bourn03}.
\end{proof}

\begin{Corollary} \label{cor:full->mono}
If a functor $F\colon\bH\to\bG$ between internal groupoids in \cC\ is full, then $\pi_0(F)$ is monomorphic. The converse is true if\, \bG\ is an internal equivalence relation.
\end{Corollary}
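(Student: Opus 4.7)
The plan is to reduce the problem, via the support functor, to Proposition~\ref{prop:ff<->mono}. Since $\pi_0 = Q\cdot\Sigma$ we have $\pi_0(F)=\pi_0(\Sigma F)$, and Proposition~\ref{prop:ff<->mono} applied to $\Sigma F\colon \Sigma\bH\to\Sigma\bG$ says that $\pi_0(F)$ is monomorphic if and only if $\Sigma F$ is fully faithful. It therefore suffices to relate fullness of $F$ with fully faithfulness of $\Sigma F$.

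For the direct implication, I would pull back the regular epi/mono factorization $(d,c)_G=(r_1,r_2)_G\cdot\sigma_G$ along $f_0\times f_0\colon H_0\times H_0\to G_0\times G_0$. Pullback stability of regular epimorphisms and of monomorphisms in the Barr-exact category \cC\ produces a factorization $P\twoheadrightarrow P'\hookrightarrow H_0\times H_0$, where $P=f_0^*\bG$ is as in Section~\ref{sec:functclasses} and $P'=f_0^*\Sigma\bG$. If $F$ is full, then $\phi_F$ is a regular epi, so the composite $H_1\twoheadrightarrow P\twoheadrightarrow P'$ is again a regular epi (composition of regular epimorphisms in a regular category); its further composite with the mono $P'\hookrightarrow H_0\times H_0$ equals $(d,c)_H$, whose regular image is $\Sigma H_1$. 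By uniqueness of the regular epi/mono factorization, $P'=\Sigma H_1$ as subobjects of $H_0\times H_0$. This forces the comparison $\phi_{\Sigma F}\colon\Sigma H_1\to P'$ to be an isomorphism, so $\Sigma F$ is fully faithful.

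For the converse, the hypothesis that $\bG$ is an equivalence relation makes $\sigma_G$ already an isomorphism, so $P=P'$ to begin with. If $\pi_0(F)$ is monomorphic, Proposition~\ref{prop:ff<->mono} gives that $\phi_{\Sigma F}$ is iso; that is, $\Sigma H_1=P'=P$ as subobjects of $H_0\times H_0$. Under this identification, $\phi_F\colon H_1\to P$ coincides with the regular epi $\sigma_H\colon H_1\twoheadrightarrow\Sigma H_1$, whence $F$ is full.

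The only delicate step is the bookkeeping in the direct implication: one must recognize that pulling back $\sigma_G$ along $f_0\times f_0$ and then precomposing with a full $\phi_F$ really produces, via uniqueness of the regular image, precisely the support $\Sigma H_1$. Beyond this, the argument relies only on pullback stability and closure under composition of regular epimorphisms in \cC, both standard features of Barr-exact categories.
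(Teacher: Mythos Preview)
Your proposal is correct and follows essentially the same approach as the paper: both arguments reduce to Proposition~\ref{prop:ff<->mono} by comparing the pullbacks $P=f_0^*\bG$ and $P'=f_0^*\Sigma\bG$ (the paper's $P_2$ and $P_1$), observing that $\phi_F$ regular epi forces $\phi_{\Sigma F}$ to be an isomorphism, and that when $\bG$ is an equivalence relation $P\cong P'$ so that $\phi_F$ identifies with $\sigma_H$. The only cosmetic difference is that you phrase the key step via uniqueness of the regular epi/mono factorization of $(d,c)_H$, whereas the paper notes directly that $\phi_{\Sigma F}$ is simultaneously a regular epimorphism and a monomorphism.
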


\begin{proof}
Let us consider the following commutative diagram, where the left hand side squares are pullbacks (i.e.\ $P_1$ and $P_2$ yield the full and faithful liftings of $f_0$ at $\Sigma G_1$ and $G_1$ respectively):
\[
\xymatrix{
H_1 \ar@{->>}[d]_{\sigma_H} \ar@/^3ex/[rr]^{f_1} \ar[r]_{\phi_F} & P_2 \ar[r] \ar@{->>}[d] & G_1 \ar@{->>}[d]^{\sigma_G} \\
\Sigma H_1 \ar@{ >->}[d] \ar@/^4ex/[rr]^(.7){\Sigma f_1} \ar[r]_{\phi_{\Sigma F}} & P_1 \ar[r] \ar@{ >->}[d] & \Sigma G_1 \ar@{ >->}[d] \\
H_0 \times H_0 \ar@{=}[r] & H_0 \times H_0 \ar[r]_{f_0 \times f_0} & G_0 \times G_0
}
\]
If $F$ is full, $\phi_F$ is a regular epimorphism by definition, hence $\phi_{\Sigma F}$ is a regular epimorphism and a monomorphism at the same time, so it is an isomorphism, i.e.\ $\Sigma F$ is full and faithful. Hence $\pi_0(F)$ is monomorphic by Proposition \ref{prop:ff<->mono}.

Conversely, if $\pi_0(F)$ is a monomorphism then $\Sigma F$ is fully faithful, i.e.\ $\phi_{\Sigma F}$ is an isomorphism. If in addition \bG\ is an equivalence relation, then $\sigma_G$ is an isomorphism, hence $P_2\cong P_1$ and $\phi_F$ is a regular epimorphism, being isomorphic to $\sigma_H$. By definition $F$ is then full.
\end{proof}

\begin{Proposition} \label{prop:es}
A functor $F\colon\bH\to\bG$ between internal groupoids in \cC\ is essentially surjective if and only if $\pi_0(F)$ is a regular epimorphism.
\end{Proposition}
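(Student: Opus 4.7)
The plan is to exploit the identity $q_G\cdot c = q_G\cdot d$ (which holds because $q_G$ coequalizes $(d,c)$) together with the defining square of $E_0$, linking the two conditions via the common arrow $q_G\cdot f_0 = \pi_0(F)\cdot q_H\colon H_0\to\pi_0(\bG)$. Concretely,
\[
q_G\cdot c\cdot \overline{f_0} \;=\; q_G\cdot d\cdot\overline{f_0} \;=\; q_G\cdot f_0\cdot p_1 \;=\; \pi_0(F)\cdot q_H\cdot p_1.
\]
Since $q_H$ is a regular epimorphism and, in a Barr-exact category, regular epis are closed under composition and satisfy the left-cancellation property (a composite being regular epi forces its leftmost factor to be so, by uniqueness of the (regular epi, mono) factorization), one has that $\pi_0(F)$ is a regular epi iff $q_G\cdot f_0$ is.

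For the ``only if'' direction, I would assume $c\cdot\overline{f_0}$ to be a regular epi and compose with $q_G$, obtaining that $\pi_0(F)\cdot(q_H\cdot p_1)$ is a regular epi; left-cancellation then yields $\pi_0(F)$ regular epi.

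For the converse, the aim is to present $c\cdot\overline{f_0}$ as a composite of two regular epimorphisms by decomposing the pullback defining $E_0$ through the (regular epi, mono) factorization $(d,c) = (r_1,r_2)\cdot\sigma_G$:
\[
\xymatrix@=3ex{
E_0 \ar[r]^{\overline{f_0}} \ar[d] & G_1 \ar@{->>}[d]^{\sigma_G} \\
P \ar[r] \ar[d] & \Sigma G_1 \ar@{ >->}[d]^{(r_1,r_2)} \\
H_0\times G_0 \ar[r]_-{f_0\times 1} & G_0\times G_0
}
\]
Since both squares are pullbacks, $E_0\to P$ is obtained by pulling back the regular epi $\sigma_G$, and hence is itself a regular epi. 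On the other hand, Barr-exactness makes $\Sigma\bG$ effective, so $(r_1,r_2)$ is the kernel pair of $q_G$; consequently the lower square identifies $P$ with $H_0\times_{\pi_0(\bG)}G_0$, and its projection to $G_0$ is the pullback of $q_G\cdot f_0$ along $q_G$. Assuming $\pi_0(F)$ a regular epi, so is $q_G\cdot f_0$, and pullback stability gives $P\to G_0$ regular epi. Composing, $c\cdot\overline{f_0}$ is a regular epi, exactly because the composite $E_0\to P\to H_0\times G_0\to G_0$ equals $c\cdot\overline{f_0}$ (by the universal property of the top pullback applied to the pair $(p_1, c\cdot\overline{f_0})$).

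The main obstacle, though routine, is the identification $P\cong H_0\times_{\pi_0(\bG)}G_0$, which hinges essentially on Barr-exactness to realize $\Sigma G_1\hookrightarrow G_0\times G_0$ as the kernel pair of $q_G$. Once this is in place, the proof reduces to the standard composition and cancellation properties of regular epimorphisms together with their pullback stability.
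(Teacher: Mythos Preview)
Your proof is correct and follows essentially the same route as the paper. Both arguments use the identity $q_G\cdot c\cdot\overline{f_0}=\pi_0(F)\cdot q_H\cdot p_1$ for the forward direction, and for the converse both factor the defining pullback of $E_0$ through $\Sigma G_1$, exploiting that $\Sigma G_1$ is the kernel pair of $q_G$ to exhibit $c\cdot\overline{f_0}$ as a composite of two pullback-stable regular epimorphisms; the only cosmetic difference is that the paper organizes the diagram over $G_0$ (one leg at a time) whereas you work over $G_0\times G_0$.
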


\begin{proof}
It suffices to focus on the following commutative diagram:
\[
\xymatrix@=7ex{
H_0 \times_{(f_0,d)} G_1 \ar@{}[dr]|{(a)} \ar@/_10ex/[dd]_{p_1} \ar[r]^-{\overline{f_0}} \ar[d]^{1\times\sigma_G} & G_1 \ar@/_5ex/[dd]^(.7){d} \ar[r]^{c} \ar[d]^{\sigma_G} & G_0 \ar@{=}[d] \\
H_0 \times_{(f_0,r_1)} \Sigma G_1 \ar@{}[dr]|{(b)} \ar[d] \ar[r]^(.6){p_2} & \Sigma G_1 \ar@{}[dr]|{(c)} \ar[d]^{r_1} \ar[r]_{r_2} & G_0 \ar[d]^{q_G} \\
H_0 \ar[r]_{f_0} & G_0 \ar[r]_{q_G} & \pi_0(\bG)
}
\]
If $F$ is essentially surjective, then $c\overline{f_0}$ is a regular epimorphism, and so is $q_Gc\overline{f_0}=q_Gf_0p_1=\pi_0(F)q_Hp_1$, hence $\pi_0(F)$ is also a regular epimorphism.

Conversely, if $\pi_0(F)$ is a regular epimorphism, then so is $q_Gf_0=\pi_0(F)q_H$. Now let us observe that $(c)$, $(b)$, $(a)+(b)$, and hence $(a)$ are all pullbacks. So $r_2p_2$ is a regular epimorphism as a pullback of $q_Gf_0$, and $1\times \sigma_G$ is a regular epimorphism as a pullback of $\sigma_G$. Then their composite $c\overline{f_0}$ is a regular epimorphism and $F$ is essentially surjective.
\end{proof}


\section{The comprehensive factorization}

We borrow from \cite{Bourn87} the definition and some needed results about the shift functor $\Dec\colon \Gpd(\cC)\to\Gpd(\cC)$.

Let us recall that, for any groupoid \bH, the pullback $H_1 \times_{(c,d)} H_1$ is also isomorphic to the kernel pair $R_d$ of $d$ (or the kernel pair $R_c$ of $c$):
\[
\xymatrix{
R_d \ar[r]_-\sim^-{(ir_1,r_2)} & H_1 \times_{(c,d)} H_1 & R_c \ar[l]^-\sim_-{(r_1,ir_2)}
}
\]
For the sake of convenience, we denote
\[
\overline{d}=m(r_1,ir_2)\colon R_c \to H_1 \qquad \mbox{and} \qquad \overline{c}=m(ir_1,r_2)\colon R_d \to H_1
\] 
We define here $\Dec$ by means of $R_c$ as the functor associating with any groupoid \bH\ in \cC\ the following internal groupoid:
\[
\xymatrix{
R_c \ar@<1ex>[r]^-{r_2} \ar@<-1ex>[r]_-{r_1} & H_1 \ar[l]|-{s}
} \,,
\]
and we denote by $\eps\bH\colon\Dec\bH\to\bH$ the internal discrete fibration
\[
\xymatrix@C=7ex{
R_c \ar@<1ex>[d]^{r_2} \ar@<-1ex>[d]_{r_1} \ar[r]^{\overline{d}} & H_1 \ar@<1ex>[d]^c \ar@<-1ex>[d]_d \\
H_1 \ar[u]|s \ar[r]_{d} & H_0 \ar[u]|e
}
\]
The following is an exact fork in $\Gpd(\cC)$:
\[
\xymatrix{
\Dec^2\bH \ar@<.7ex>[r]^{\eps{\Dec\bH}} \ar@<-.7ex>[r]_{\Dec\eps\bH} & \Dec\bH \ar[r]^-{\eps\bH} & \bH
} \,.
\]

Following \cite{BR}, we give here a description of the comprehensive factorization in $\Gpd(\cC)$. Further details can be found also in \cite{Bourn87}. Let $F\colon\bH\to\bG$ be an internal functor, then the pair $(\Dec F,\Dec^2 F)$ gives rise to a morphism between equivalece relations in $\Gpd(\cC)$:
\[
\xymatrix@C=9ex{
\Dec^2\bH \ar@<.7ex>[d]^{\eps{\Dec\bH}} \ar@<-.7ex>[d]_{\Dec\eps\bH} \ar[r]^{\Dec^2 F} & \Dec^2\bG \ar@<.7ex>[d]^{\eps{\Dec\bG}} \ar@<-.7ex>[d]_{\Dec\eps\bG} \\
\Dec\bH \ar[r]_{\Dec F} & \Dec\bG
}
\]
We consider the following factorization of the above functor, where all the right hand side squares are pullbacks:
\begin{equation} \label{diag:Dec2}
\begin{aligned}
\xymatrix{
\Dec^2\bH \ar@<.7ex>[d]^{\eps{\Dec\bH}} \ar@<-.7ex>[d]_{\Dec\eps\bH} \ar[r] & \bR_{\Delta} \ar@<.7ex>[d] \ar@<-.7ex>[d] \ar[r] & \Dec^2\bG \ar@<.7ex>[d]^{\eps{\Dec\bG}} \ar@<-.7ex>[d]_{\Dec\eps\bG} \\
\Dec\bH \ar[d]_{\eps\bH} \ar[r] & \bE \ar[r]^-{\overline{F}} \ar[d]_{\Delta} \ar@{}[dr]|{(\ast)} & \Dec\bG \ar[d]^{\eps\bG} \\
\bH \ar@{=}[r] & \bH \ar[r]_{F} & \bG
}
\end{aligned}
\end{equation}
Finally, applying the functor $\pi_0$ to the upper rectangle, we get the factorization of $F$ into a final functor $J=(j_0,j_1)$ followed by a discrete fibration $K=(k_0,k_1)$:
\begin{equation} \label{diag:cf}
\begin{aligned}
\xymatrix{
H_1 \ar@<.7ex>[d]^{c} \ar@<-.7ex>[d]_{d} \ar[r]^{j_1} & T_1 \ar@<.7ex>[d]^{c} \ar@<-.7ex>[d]_{d} \ar[r]^{k_1} & G_1 \ar@<.7ex>[d]^{c} \ar@<-.7ex>[d]_{d} \\
H_0 \ar[r]_{j_0} & T_0 \ar[r]_{k_0} & G_0
}
\end{aligned}
\end{equation}


\section{Final functors}

The idea behind the characterization of final functors lies in the following lemma.

\begin{Lemma} \label{lemma:firstchar}
A functor $F\colon\bH\to\bG$ between internal groupoids in \cC\ is final if and only if its pullback $\overline{F}\colon\bE\to\Dec\bG$ along $\eps\bG$ is inverted by $\pi_0$.
\end{Lemma}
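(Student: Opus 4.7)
My plan is to derive the lemma from the comprehensive factorization $F = K \cdot J$ of (\ref{diag:cf}), in which $J$ is final and $K$ is a discrete fibration by construction. The essential observation I would use is that the object-component $k_0$ of $K$ coincides with $\pi_0(\overline{F})$: indeed $\pi_0(\Dec\bG) = G_0$ (the equivalence relation associated to $\Dec\bG$ is the kernel pair of the split epimorphism $c \colon G_1 \to G_0$), so applying $\pi_0$ to the right-hand column of the upper rectangle of (\ref{diag:Dec2}) yields $k_0 = \pi_0(\overline{F})$. Moreover, because $K$ is a discrete fibration, $T_1$ is the pullback of $k_0$ along $c$, and therefore $K$ is an isomorphism in $\Gpd(\cC)$ if and only if $k_0 = \pi_0(\overline{F})$ is an isomorphism in $\cC$.

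For the forward implication, assuming $F$ is final, I would invoke the orthogonality of the (final, discrete fibration) factorization system established in \cite{Bourn87}. Applied to the commutative square $K \cdot J = 1_\bG \cdot F$, orthogonality produces a unique diagonal $w \colon \bG \to \bT$ satisfying $wF = J$ and $Kw = 1_\bG$. A second application, to the square $K \cdot J = K \cdot J$ in which both $1_\bT$ and $wK$ are diagonal fillers, then forces $wK = 1_\bT$. Hence $K$, and thus $\pi_0(\overline{F})$, is an isomorphism.

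For the converse, assuming $\pi_0(\overline{F})$ is an isomorphism, I would note that by the first paragraph $K$ is then an isomorphism, so $F = K \cdot J$ exhibits $F$ as the composite of a final functor with an isomorphism; since isomorphisms always lie in the left class of any orthogonal factorization system, and that class is closed under composition, $F$ is final. The one slightly delicate step I anticipate is the identification $k_0 = \pi_0(\overline{F})$ together with the reduction from $K$ being an isomorphism to $k_0$ being one; once these observations are in place, both implications reduce to standard orthogonal-factorization-system manipulations.
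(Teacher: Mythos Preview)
Your proposal is correct and follows essentially the same route as the paper: identify $k_0$ with $\pi_0(\overline{F})$, then use that $K$ is a discrete fibration to reduce ``$K$ is an isomorphism'' to ``$k_0$ is an isomorphism,'' and finally invoke the factorization system to pass between ``$F$ final'' and ``$K$ invertible.'' The only difference is cosmetic: where the paper simply asserts that $F$ final forces $K$ to be an isomorphism (a standard consequence of orthogonality), you unpack this with the explicit diagonal-filler argument, which is fine and perhaps even clearer for a reader not steeped in factorization systems.
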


\begin{proof}
First of all, notice that $\pi_0(\overline{F})$ is nothing but the arrow $k_0$ in diagram (\ref{diag:cf}), so saying that $\overline{F}$ is inverted by $\pi_0$ means that the arrow $k_0$ is an isomorphism.

Now, if $F$ is final, $K$ is an isomorphism, and so is $k_0$. Conversely, since $K$ is a dicrete fibration, the right hand side commutative squares in (\ref{diag:cf}) are pullbacks, hence if $k_0$ is an isomorphism, so is $k_1$, and $F$ is final.
\end{proof}

We are now ready for an internal version of Proposition \ref{prop:final=fes}.

\begin{Theorem} \label{thm:main}
A functor between internal groupoids in a Barr-exact category is final if and only if it is full and essentially surjective.
\end{Theorem}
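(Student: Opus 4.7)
The plan is to invoke Lemma \ref{lemma:firstchar}, reducing the theorem to showing that $\pi_0(\overline{F})$ is an isomorphism if and only if $F$ is full and essentially surjective. Since \cC\ is Barr-exact, $\pi_0(\overline{F})$ is an isomorphism precisely when it is both a monomorphism and a regular epimorphism, so I will match these two conditions with fullness and essential surjectivity of $F$ respectively.

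The key structural observation is that $\Dec\bG$ is an internal equivalence relation: its object of arrows is the kernel pair $R_c$ of $c\colon G_1\to G_0$, so $(r_1,r_2)\colon R_c\to G_1\times G_1$ is a monomorphism by construction, and the coequalizer of $R_c\rightrightarrows G_1$ is $c$ itself (which is a split epi via $e$), giving $\pi_0(\Dec\bG)\cong G_0$ with quotient map $c$. This allows Corollary \ref{cor:full->mono} to be used as a genuine biconditional on $\overline{F}\colon\bE\to\Dec\bG$: $\pi_0(\overline{F})$ is a monomorphism iff $\overline{F}$ is full. Proposition \ref{prop:pbff}(1) applied to the square $(\ast)$ in diagram (\ref{diag:Dec2}) then transfers fullness between $\overline{F}$ and $F$; its hypothesis, that the arrow component $\overline{d}\colon R_c\to G_1$ of $\eps\bG$ be a regular epimorphism, holds because $g\mapsto(g,ec(g))$ is a section of $\overline{d}$. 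Hence $\pi_0(\overline{F})$ is a monomorphism if and only if $F$ is full.

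For the regular epimorphism side I bypass $\pi_0(F)$ (since $\pi_0$ does not preserve pullbacks in general) and exploit the identification $\pi_0(\Dec\bG)\cong G_0$ directly. With quotient $q_{\Dec\bG}=c$, the naturality square for $\pi_0$ applied to $\overline{F}$ reads $\pi_0(\overline{F})\cdot q_\bE=c\cdot\overline{f_0}$, where $\overline{f_0}\colon E_0\to G_1$ is the object component of $\overline{F}$ and also the top edge of the pullback defining essential surjectivity. Since $q_\bE$ is a regular epi, standard closure properties of regular epis in the regular category \cC\ give that $\pi_0(\overline{F})$ is a regular epimorphism iff $c\cdot\overline{f_0}$ is, which by the definition recalled in Section \ref{sec:functclasses} is precisely essential surjectivity of $F$.

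Combining these two characterizations via Lemma \ref{lemma:firstchar} yields the theorem. The one point that genuinely requires attention is the recognition that $\Dec\bG$ is an equivalence relation with $\pi_0(\Dec\bG)\cong G_0$ via $c$: without it, Corollary \ref{cor:full->mono} would produce only a one-sided implication on $\overline{F}$, and the clean identification $\pi_0(\overline{F})\cdot q_\bE=c\cdot\overline{f_0}$ would not be available. Everything else is a direct assembly of Proposition \ref{prop:pbff} and the definitions from Section \ref{sec:functclasses}.
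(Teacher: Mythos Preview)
Your proof is correct and follows essentially the same route as the paper: both reduce via Lemma~\ref{lemma:firstchar} to analyzing $\pi_0(\overline{F})$, exploit that $\Dec\bG$ is an equivalence relation to make Corollary~\ref{cor:full->mono} a biconditional, use Proposition~\ref{prop:pbff}(1) with the split epimorphism $\overline{d}$ to transfer fullness, and use the identity $\pi_0(\overline{F})\cdot q_\bE=c\cdot\overline{f_0}$ for essential surjectivity. Your organization is slightly more symmetric---you package both conditions as clean biconditionals and use $c\cdot\overline{f_0}=\pi_0(\overline{F})q_\bE$ in both directions, whereas the paper, for the implication ``$\pi_0(\overline{F})$ iso $\Rightarrow$ $F$ essentially surjective'', detours through $\pi_0(F)$ via Proposition~\ref{prop:es}---but the substance is the same.
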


\begin{proof}
Let us consider the vertical expansion of the pullback $(\ast)$ in diagram (\ref{diag:Dec2}) and take the (bijective on objects, fully faithful) factorizations of the internal functors $F$ and $\overline{F}$:
\[
\xymatrix{
E_1 \ar[dr] \ar[rr]^{\phi_{\overline{F}}} \ar@<.7ex>[dd]^{c} \ar@<-.7ex>[dd]_{d} & & \overline{P} \ar[rr] \ar[dr]^{v} \ar@<.7ex>[dd] \ar@<-.7ex>[dd] & & R_c \ar[dr]^{\overline{d}} \ar@<.7ex>[dd]^(.35){r_2} \ar@<-.7ex>[dd]_(.35){r_1} \\
& H_1 \ar@<.7ex>[dd]^(.35){c} \ar@<-.7ex>[dd]_(.35){d} \ar[rr]^(.3){\phi_{F}} & & P \ar[rr] \ar@<.7ex>[dd] \ar@<-.7ex>[dd] & & G_1 \ar@<.7ex>[dd]^{c} \ar@<-.7ex>[dd]_{d} \\
E_0 \ar@{=}[rr] \ar[dr] & & E_0 \ar[rr]_(.75){\overline{f_0}} \ar[dr] & & G_1 \ar[dr]_{d} \\
& H_0 \ar@{=}[rr] & & H_0 \ar[rr]_{f_0} & & G_0
}
\]
Thanks to Lemma \ref{lemma:firstchar}, we only have to prove that $\overline{F}$ is inverted by $\pi_0$ if and only if $F$ is full and essentially surjective.

Suppose $\pi_0(\overline{F})$ is an isomorphism. Then, by Corollary \ref{cor:full->mono}, since $\Dec\bG$ is an equivalence relation, $\overline{F}$ is full, i.e.\ $\phi_{\overline{F}}$ is a regular epimorphism. But $v$ is a split epimorphism, being a pullback of $\overline{d}$, so $\phi_{F}$ is a regular epimorphism and $F$ is full.  Moreover, $\pi_0(\eps\bG)$ is a regular epimorphism since $\eps\bG$ is, so $\pi_0(F)\cdot\pi_0(U)=\pi_0(\eps\bG)\cdot\pi_0(\overline{F})$ is a regular epimorphism. This implies that $\pi_0(F)$ is a regular epimorphism, hence, by Proposition \ref{prop:es}, $F$ is essentially surjective.

Conversely, if $F$ is full, then $\overline{F}$ is full by Proposition \ref{prop:pbff}, hence $\pi_0(\overline{f})$ is a monomorphism by Corollary \ref{cor:full->mono}. If in addition $F$ is essentially surjective, $c\overline{f_0}=\pi_0(\overline{F})q_{\bE}$ is a regular epimorphism. So $\pi_0(\overline{F})$ is a regular epimorphism, hence an isomorphism.
\end{proof}

\begin{Corollary} \label{cor:thirdchar}
A functor $F\colon\bH\to\bG$ between internal groupoids in \cC\ is final if and only if
\begin{enumerate}
 \item[(i)] $\pi_0(F)$ is an isomorphism;
 \item[(ii)] the arrow $\psi_F$ in the following commutative diagram is a regular epimorphism:
\begin{equation} \label{diag:psi}
\begin{aligned}
\xymatrix@=2.5ex{
H_1 \ar[ddd]_{\sigma_H} \ar[rrr]^{f_1} \ar[dr]^{\psi_F} & & & G_1 \ar[ddd]^{\sigma_G} \\
& \Sigma H_1 \times_{\Sigma G_1} G_1 \ar[ddl]^\sigma \ar[urr] \\
\\
\Sigma H_1 \ar[rrr]_{\Sigma f_1} & & & \Sigma G_1
}
\end{aligned}
\end{equation}
\end{enumerate}
\end{Corollary}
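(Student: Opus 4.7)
The plan is to reduce to Theorem \ref{thm:main} by showing that conditions (i) and (ii) together are equivalent to $F$ being full and essentially surjective. The bridge between the two characterizations is a careful comparison of the arrow $\psi_F$ with the arrow $\phi_F$ of Section \ref{sec:functclasses}.

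First I would analyze the joint pullback $P$ defining $\phi_F$. Factoring $(d,c)\colon G_1\to G_0\times G_0$ as $\sigma_G\colon G_1\to\Sigma G_1$ followed by the mono $(r_1,r_2)\colon\Sigma G_1\hookrightarrow G_0\times G_0$ presents $P$ as an iterated pullback $P\cong Q\times_{\Sigma G_1}G_1$, where $Q=(H_0\times H_0)\times_{G_0\times G_0}\Sigma G_1$. Since $\Sigma G_1$ is the kernel pair of $q_G$, the object $Q$ is the kernel pair of the composite $q_G\cdot f_0=\pi_0(F)\cdot q_H$, while $\Sigma H_1$ is itself the kernel pair of $q_H$. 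The canonical comparison $\Sigma H_1\hookrightarrow Q$ is therefore a monomorphism, and the standard fact that $\ker(m\circ e)=\ker e$ whenever $m$ is a monomorphism shows it is an isomorphism precisely when $\pi_0(F)$ is a monomorphism. Under this hypothesis $P\cong\Sigma H_1\times_{\Sigma G_1}G_1$, and $\phi_F$ gets identified with $\psi_F$.

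With this identification in hand, the theorem splits into two easy halves. For the forward implication, Theorem \ref{thm:main} gives $F$ full and essentially surjective; Corollary \ref{cor:full->mono} and Proposition \ref{prop:es} then force $\pi_0(F)$ to be simultaneously a monomorphism and a regular epimorphism, hence an isomorphism, yielding (i); condition (ii) follows since, under (i), $\psi_F$ is identified with $\phi_F$, which is a regular epimorphism by fullness. Conversely, (i) immediately produces essential surjectivity via Proposition \ref{prop:es}, and $\pi_0(F)$ being a monomorphism also permits the identification $\psi_F\cong\phi_F$, so (ii) yields fullness; Theorem \ref{thm:main} then closes the argument.

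I expect the main obstacle to be setting up the identification $P\cong\Sigma H_1\times_{\Sigma G_1}G_1$ (equivalently, $\Sigma H_1\cong Q$ as subobjects of $H_0\times H_0$) under the hypothesis that $\pi_0(F)$ is monic; once this pullback manipulation is in place, the remaining steps are a routine reassembly of Proposition \ref{prop:es}, Corollary \ref{cor:full->mono}, and Theorem \ref{thm:main}.
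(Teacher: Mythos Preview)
Your proposal is correct and follows essentially the same route as the paper. Both proofs hinge on the identification $\phi_F\cong\psi_F$ under the hypothesis that $\pi_0(F)$ is monic, and then invoke Theorem \ref{thm:main}, Proposition \ref{prop:es}, and Corollary \ref{cor:full->mono} in the obvious way; the only difference is that the paper obtains the identification by quoting Proposition \ref{prop:ff<->mono} (so that $\Sigma F$ is fully faithful, whence your $Q$---the paper's $P_1$ in the proof of Corollary \ref{cor:full->mono}---coincides with $\Sigma H_1$), whereas you unpack this step directly via the kernel-pair description of $Q$.
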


\begin{proof}
By Proposition \ref{prop:ff<->mono}, $\pi_0(F)$ is a monomorphism if and only if $\Sigma F$ is fully faithful. In this case, the arrow $\psi_F$ coincides with the arrow $\phi_F$ defined in Section \ref{sec:functclasses}.

Suppose $F$ is final, then by Theorem \ref{thm:main} it is full and essentially surjective, hence, by Proposition \ref{prop:es} and Corollary \ref{cor:full->mono}, $\pi_0(F)$ is an isomorphism and $\phi_F=\psi_F$ is a regular epimorphism.

Conversely, if $(i)$ and $(ii)$ hold, then $\phi_F=\psi_F$ is a regular epimorphism, so $F$ is full, and moreover $\pi_0(F)$ is a regular epimorphism, i.e.\ $F$ is essentially surjective.
\end{proof}

\end{document}